\newtheorem{theorem}{Theorem}
\newtheorem*{theorem*}{Theorem}
\newtheorem{proposition}{Proposition}
\theoremstyle{definition}
\newtheorem*{definition*}{\bf Definition}
\newtheorem{example}{\sc Example}
\newtheorem{remark}{\sc Remark}
\newtheorem*{remark*}{\sc Remark}
\newtheorem*{example*}{\sc Example}
\newcommand{\loc}{{\rm loc}}
\begin{document}

\title[Strong solutions and form-bounded drifts]{Strong solutions of SDEs with singular (form-bounded) drift via R\"{o}ckner-Zhao approach}

\author{D.\,Kinzebulatov}

\author{K.R.\,Madou}

\email{damir.kinzebulatov@mat.ulaval.ca}

\email{kodjo-raphael.madou.1@ulaval.ca}

\address{Universit\'{e} Laval, D\'{e}partement de math\'{e}matiques et de statistique, Qu\'{e}bec, QC, Canada}

\keywords{Stochastic differential equations, strong solutions, singular drifts, form-boundedness}

\subjclass[2010]{60H10, 47D07 (primary), 35J75 (secondary)}

\thanks{The research of D.K. is supported by the NSERC (grant RGPIN-2017-05567).}

\begin{abstract}
We use the approach of R\"{o}ckner-Zhao to prove strong well-posedness for SDEs with singular drift satisfying some minimal assumptions.
\end{abstract}

\maketitle

\section{Introduction and result}

\textbf{1.~}Consider stochastic differential equation (SDE)
\begin{equation}
\label{sde1}
X_t^x=x+\int_0^t b(s,X_s^x)ds + W_t, \quad 0 \leq t \leq T,
\end{equation}
where $x \in \mathbb R^d$, $d \geq 3$, $b:\mathbb R^{d+1} \rightarrow \mathbb R^d$ is a Borel measurable vector field (drift), and $\{W_t\}_{0 \leq t \leq T}$ is a Brownian motion on a complete filtered probability space $(\Omega,\{\mathcal F_t\}_{0 \leq t \leq T},\mathcal F,\mathbf{P})$.

\medskip

One of the central problems in the theory of diffusion processes is the problem of strong well-posedness of SDE \eqref{sde1} under minimal assumptions on a locally unbounded drift $b$, for every starting point $x \in \mathbb R^d$. The following are the milestone results. Veretennikov \cite{V} was first who proved strong well-posedness of \eqref{sde1} for discontinuous drifts $b \in L^\infty(\mathbb R \times \mathbb R^d)$. Krylov-R\"{o}ckner \cite{KR} established strong well-posedness assuming that the drift in the sub-critical  
Ladyzhenskaya-Prodi-Serrin class 
\begin{equation}
\label{LPS0}
b \in L^p(\mathbb R,L^q(\mathbb R^d)), \quad \frac{d}{q}+\frac{2}{p} < 1, \quad p > 2, \quad q > d.
\end{equation}
Beck-Flandoli-Gubinelli-Maurelli \cite{BFGM} established strong existence and uniqueness for drifts in the critical Ladyzhenskaya-Prodi-Serrin class 
\begin{equation}
\label{LPS}
\tag{LPS}
b \in L^p(\mathbb R,L^q(\mathbb R^d)), \quad \frac{d}{q}+\frac{2}{p} \leq 1, \quad p \geq 2, \quad q \geq d,
\end{equation}
but only for a.e.\,starting point $x \in \mathbb R^d$. A major step forward was made recently by R\"{o}ckner-Zhao \cite{RZ} who established strong existence and uniqueness for  \eqref{sde1}
with drift $b$ in the critical Ladyzhenskaya-Prodi-Serrin class \eqref{LPS} ($p>2$) for every $x \in \mathbb R^d$. 
Another major advancement is the series of papers \cite{Kr1,Kr2,Kr3,Kr_strong} 
where Krylov proved strong well-posedness of \eqref{sde1}, for every $x \in \mathbb R^d$, 
for $|b| \in L^d$ and beyond, in a large Morrey class of time-inhomogeneous drifts 
(in terms of the Morrey norm \eqref{morrey_elliptic_2}, one has to have $\|b\|_{M_{s}}$, $s>\frac{d}{2} \vee 2$, sufficiently small).

The method of R\"{o}ckner-Zhao is different from the methods used in the other cited papers, and is based on a relative compactness criterion for random fields on the Wiener-Sobolev space. Their proof of uniqueness uses Cherny's theorem \cite{C} (strong existence + weak uniqueness $\Rightarrow$ strong uniqueness). The method of \cite{RZ} is a far-reaching strengthening of the methods of Meyer-Brandis and Proske \cite{MP}, Mohammed-Nilsen-Proske \cite{MNP} (for $b \in L^\infty(\mathbb R \times \mathbb R^d)$) and Rezakhanlou \cite{R} (for $b$ in \eqref{LPS0}). We refer again to \cite{RZ} for  a comprehensive survey of these and other important results on strong well-posedness of SDE \eqref{sde1}.

\medskip

We show in this paper that the method of R\"{o}ckner-Zhao  works, with few modifications, for a larger  class of form-bounded drifts. 
Together with the weak uniqueness result from \cite{KM}, their method yields strong well-posedness of SDE \eqref{sde1} with form-bounded drift (Theorem \ref{thm1}).

\begin{definition*}
A locally square integrable vector field $b:\mathbb R^{d+1} \rightarrow \mathbb R^d$  is said to be form-bounded  if 
there exist a constant $\delta>0$ such that for a.e.\,$t \in \mathbb R$ the following quadratic form inequality holds:
\begin{align}
\label{fbb_inhom}
\|b(t,\cdot)\varphi\|_2^2   \leq \delta \|\nabla \varphi\|_2^2 +g_\delta(t)\|\varphi\|_2^2
\end{align}
for all $\varphi \in W^{1,2}$, for some function $0 \leq g_\delta \in L^1_{\loc}(\mathbb R)$.
\end{definition*}

Throughout the paper, $\|\cdot\|_p$ denotes the norm in the Lebesgue space $L^p:=L^p(\mathbb R^d,dx)$; $W^{1,p}:=W^{1,p}(\mathbb R^d,dx)$ is the Sobolev space. 

\medskip

Condition \eqref{fbb_inhom} will be written as $b \in \mathbf{F}_\delta.$ This is essentially the largest class of vector fields $b$, defined in terms of $|b|$, that provides an $L^2$ theory of 
divergence-form operator $-\nabla \cdot a \cdot \nabla  + b \cdot \nabla$. See \cite{Ki_survey} for detailed discussion.

\begin{example}
The critical Ladyzhenskaya-Prodi-Serrin class  \eqref{LPS}
is contained in the class of form-bounded vector fields. For $q=d$ and $p=\infty$ this is an immediate consequence of the Sobolev embedding theorem:
$$
\|b(t,\cdot)\varphi\|_2^2  \leq \|b(t,\cdot)\|_d^2 \|\varphi\|_{\frac{2d}{d-2}}^2 \leq C_S \|b(t,\cdot)\|_d^2 \|\nabla \varphi\|_2^2,
$$
so $\delta=C_S \sup_{t \in \mathbb R}\|b(t,\cdot)\|_d^2$ and $g_\delta=0$ 
 (for $q>d$ and $p<\infty$ using, additionally, a simple interpolation argument, in which case $g$ is in general non-zero, see e.g.\,\cite{KM} for the proof). Moreover, if e.g.\,$b \in C_c(\mathbb R,L^d(\mathbb R^d))$, then form-bound $\delta$ can be chosen arbitrarily small at expense of increasing $g_\delta$.

\end{example}

\begin{example}
\label{morrey_ex}
Another subclass of \eqref{fbb_inhom}, which is considerably larger than $L^\infty(\mathbb R,L^d)$, consists of vector fields $b$ such that $b(t,\cdot)$ belongs, uniformly in $t \in \mathbb R$, to the scaling-invariant Morrey class $M_{2+\varepsilon}$. That is,
\begin{equation}
\label{morrey_elliptic_2}
\sup_{t \in \mathbb R}\|b(t,\cdot)\|_{M_{2+\varepsilon}}=\sup_{t \in \mathbb R}\sup_{r>0, x \in \mathbb R^d} r\biggl(\frac{1}{|B_r|}\int_{B_r(x)}|b(t,\cdot)|^{2+\varepsilon}dx \biggr)^{\frac{1}{2+\varepsilon}}<\infty
\end{equation}
where $B_r(x)$ is the ball of radius $r$ centered at $x$, and $\varepsilon$ is fixed arbitrarily small. 
Then, by a result in \cite{F} (see also \cite{CF}), 
$$
b \in \mathbf{F}_\delta  \quad \text{with $\delta=C\sup_{t \in \mathbb R}\|b(t,\cdot)\|_{M_{2+\varepsilon}}$} \text{ and } g_\delta=0
$$
for appropriate constant $C$. Note that Morrey $M_s$ becomes larger as $s$ becomes smaller.
\end{example}

\begin{example}
Morrey class \eqref{morrey_elliptic_2} contains vector fields $b$ with
$
\|b\|_{L^\infty(\mathbb R,L^{d,w})}<\infty.
$

\medskip

Recall that the norm in the weak $L^d$ space is defined as $$\|h\|_{L^{d,w}}:=\sup_{s>0}s|\{x \in \mathbb R^d: |h(x)|>s\}|^{1/d}.$$ 
(Clearly, $L^d \subset L^{d,w}$, but not vice versa, e.g.\,$h(x)=|x|^{-1}$ is in $L^{d,w}$ but not in $L^d$.)
\end{example}

Let us add that the attracting drift $$b(x)=-\frac{d-2}{2}\sqrt{\delta}|x|^{-2}x,$$ 
which is contained\footnote{and not contained in any $\mathbf{F}_{\delta'}$ with $\delta'<\delta$ regardless of the choice of $g_{\delta'}$} in $\mathbf{F}_\delta$ with $g_\delta=0$ (and is contained in Examples 2 and 3, but not in Example 1) has critical singularity at the origin. That is, if $\delta>0$ is too large, then SDE \eqref{sde1} with starting point $x=0$ does not even have a weak solution.  But, if $\delta$ is sufficiently small, then this SDE is strongly well-posed, see Theorem \ref{thm1}. (In fact, the critical value of $\delta$ for weak solvability, at least in high dimensions, is $\delta=4$, see \cite{KiS_sharp}.)

\medskip

An equivalent form of the a.e.\,inequality \eqref{fbb_inhom} is: for every $-\infty<t_1<t_2<\infty$,
$$
\int_{t_1}^{t_2} \|b(t)\psi(t)\|_2^2 dt \leq \delta \int_{t_1}^{t_2} \|\nabla \psi(t)\|_2^2 dt + \int_{t_1}^{t_2} g_\delta (t) \|\psi(t)\|_2^2 dt
$$
for all $\psi \in L^\infty(\mathbb R,W^{1,2})$.

\medskip

The class of form-bounded drifts is well known in the literature on parabolic equations, see Sem\"{e}nov \cite{S} and references therein.

\bigskip

\textbf{2.~}Our goal here is to prove a principal result: the SDE \eqref{sde1} with drift $b$ having form-bounded singularities is strongly well-posed. So, we will require in this paper, for simplicity,

\medskip

($A$)  $b$ has compact support and $g_\delta=0$ (the last assumption can be removed, see Remark \ref{g_rem}).

\medskip

Fix $T>0$.

\begin{theorem}
\label{thm1}
Let $d \geq 3$. Assume that $b \in \mathbf{F}_\delta$ and satisfies ($A$). Then, provided that form-bound $\delta$ is sufficiently small, for every $x \in \mathbb R^d$, SDE \eqref{sde1} has a strong solution $X_t^x$. This strong solution satisfies the following Krylov-type bounds:

{\rm 1)} For a given $q \in ]d,\delta^{-\frac{1}{2}}[$ and any vector field $\mathsf{g} \in \mathbf{F}_{\delta_1}$, $\delta_1<\infty$,
\begin{equation}
\label{krylov0}
\mathbf E \int_0^T |\mathsf{g}h|(\tau,X_{0,\tau}^x)d\tau \leq c\|\mathsf{g}|h|^{\frac{q}{2}}\|^{\frac{2}{q}}_{L^2([0,T] \times \mathbb R^d)} \quad \text{ for all }h \in C_c([0,T] \times \mathbb R^d).
\end{equation}

{\rm 2)} For a given $\mu>\frac{d+2}{2}$, there exists constant $C$ such that
\begin{equation}
\label{krylov}
\mathbf{E}\bigg[\int_{0}^{T}|h(\tau,X_{0,\tau}^{x})|d\tau\bigg] \leq C\|h\|_{L^{\mu}([0,T] \times \mathbb R^d)} \quad \text{ for all }h \in C_c([0,T] \times \mathbb R^d).
\end{equation}

Solution $X_t^x$ is unique among strong solutions to \eqref{sde1} that satisfy \eqref{krylov0} for some $q \in ]d,\delta^{-\frac{1}{2}}[$ with $\mathsf{g}=1$ and with $\mathsf{g}=b$. 

If, in addition to our hypothesis on $b$, one has $|b| \in L^{\frac{d+2}{2}+\varepsilon}$ for some $\varepsilon>0$, then $X_t^x$ is unique among strong solutions to \eqref{sde1} that satisfy \eqref{krylov}.
\end{theorem}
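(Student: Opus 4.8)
The plan follows \cite{RZ}: reduce the uniqueness assertion to a \emph{weak} uniqueness statement and then invoke Cherny's theorem \cite{C}. By the first part of the theorem there is a strong solution $X^{x}$ of \eqref{sde1} which, by \eqref{krylov}, lies in the class $\mathcal K$ of weak solutions of \eqref{sde1} with the prescribed initial point $x$ that satisfy \eqref{krylov}. Cherny's theorem asserts that strong existence together with uniqueness in law forces pathwise uniqueness and forces every weak solution to be strong; applied inside $\mathcal K$, it shows that once all members of $\mathcal K$ have the same law, $X^{x}$ is the unique strong solution satisfying \eqref{krylov}. So the whole statement reduces to the following: under the additional hypothesis $|b|\in L^{\frac{d+2}{2}+\varepsilon}$, any two weak solutions of \eqref{sde1} satisfying \eqref{krylov} have the same law.

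For the narrower class $\mathcal K^{0}$ defined by \eqref{krylov0} (with $\mathsf g=1$ and $\mathsf g=b$) this weak uniqueness statement is exactly the one proved in \cite{KM} for form-bounded drifts, and is what underlies the first, unconditional uniqueness claim; its proof rests on the $L^{2}$-theory of $-\tfrac12\Delta+b\cdot\nabla$ supplied by \eqref{fbb_inhom}. The $L^{\mu}$-Krylov class $\mathcal K$ is a priori strictly larger than $\mathcal K^{0}$, so one must upgrade weak uniqueness to all of $\mathcal K$; this is where the extra integrability $|b|\in L^{\frac{d+2}{2}+\varepsilon}$ is used. I would argue through the backward Kolmogorov equation: for $h\in C_{c}([0,T]\times\mathbb R^{d})$ solve
\begin{equation*}
\partial_{s}u+\tfrac12\Delta u+b\cdot\nabla u=-h \quad\text{on }[0,T]\times\mathbb R^{d},\qquad u(T,\cdot)=0,
\end{equation*}
obtaining a bounded $u$ whose gradient is controlled in $L^{p}([0,T]\times\mathbb R^{d})$ with $p$ large, matched through parabolic Sobolev embedding to the $L^{\mu}$-scale of \eqref{krylov}. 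Such a $u$ is constructed perturbatively off the heat semigroup: split $b$ by truncation into a part with arbitrarily small $L^{\frac{d+2}{2}+\varepsilon}$-norm, absorbed into the parabolic $L^{\mu}$-regularity estimate, and a bounded remainder, handled by iteration; the compact support and form-boundedness of $b$ keep the construction consistent and guarantee that $X^{x}$ itself lies in the resulting solution class. Applying It\^{o}'s formula to $s\mapsto u(s,X_{s})$ for an arbitrary weak solution $X\in\mathcal K$ --- legitimate after regularizing $u$ and passing to the limit, the error being controlled by \eqref{krylov} --- gives $u(0,x)=\mathbf E\int_{0}^{T}h(s,X_{s})\,ds$; the same computation carried out with a $C_{c}$ terminal datum in place of the source identifies $\mathbf E\,g(X_{\tau})$, for each $\tau\le T$ and $g\in C_{c}(\mathbb R^{d})$, with a number not depending on the particular weak solution $X\in\mathcal K$. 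Hence all weak solutions in $\mathcal K$ have the same one-dimensional marginals, and the standard fact that uniqueness of one-dimensional marginals entails well-posedness of the associated (space-time) martingale problem promotes this to equality of laws on path space.

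The main obstacle is the a priori estimate for $u$. Form-boundedness by itself yields $L^{q}$-type bounds only for $q<\delta^{-1/2}$ --- exactly the exponents in \eqref{krylov0}, so it only controls the smaller class $\mathcal K^{0}$; to reach test functions $h$ in the $L^{\mu}$-scale with $\mu$ only slightly above $\tfrac{d+2}{2}$ one must genuinely exploit the additional $L^{\frac{d+2}{2}+\varepsilon}$-integrability of $b$, through a truncation and iteration scheme that remains stable in the presence of the critical singularities of $b$. Two further technical points need care: justifying It\^{o}'s formula for a merely Sobolev $u$ against a solution known only through the Krylov bound \eqref{krylov}, so that the stochastic integral is a true martingale (which needs $\nabla u$ in a sufficiently high $L^{p}([0,T]\times\mathbb R^{d})$); and ensuring that the class in which weak uniqueness is established is the same class $\mathcal K$ in which strong existence was obtained, so that Cherny's theorem applies without a gap. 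A possible shortcut --- showing directly that every weak solution satisfying \eqref{krylov} already satisfies \eqref{krylov0}, by H\"{o}lder's inequality in the $L^{\mu}$--$L^{q}$ scale together with the extra integrability of $b$ and its compact support (outside of which the solution is a Brownian motion, for which all Krylov bounds hold trivially) --- seems workable but is delicate, since the matching of exponents is tight.
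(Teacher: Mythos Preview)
Your proposal addresses only the uniqueness assertions and treats the existence of the strong solution and the Krylov bounds \eqref{krylov0}, \eqref{krylov} as already established (``By the first part of the theorem there is a strong solution\dots''). But these are the first part of the theorem and constitute the bulk of the paper's proof. The paper approximates $b$ by smooth compactly supported $b_m\in\mathbf F_\delta$, solves the regularized SDEs, and then uses Propositions~\ref{prop1} and~\ref{prop2} to obtain uniform-in-$m$ bounds on $\nabla_x X^{x,m}$ and on the Malliavin derivatives $D_s X^{x,m}$ in $L^{2r}(\mathbb R^d,L^r(\Omega))$; these verify the hypotheses of the R\"ockner--Zhao relative compactness criterion on Wiener--Sobolev space, giving $L^2(\Omega)$-convergence along a subsequence. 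Combined with the H\"older-in-$(s,t,x)$ estimate \eqref{ineq1} (derived from $L^\infty$ bounds on solutions of the backward equation via \cite{KM}), Kolmogorov--Chentsov yields a continuous random field $X^x$, and a separate argument using \eqref{bd_}, \eqref{bd_2} shows $\int_s^t b_m(\tau,X^{x,m}_{s,\tau})d\tau\to\int_s^t b(\tau,X^{x}_{s,\tau})d\tau$ in $L^1(\Omega)$, so that $X^x$ solves \eqref{sde1}. None of this is in your write-up, and without it there is no strong solution on which to invoke Cherny.

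On the uniqueness portion itself: for the class defined by \eqref{krylov0} your reduction via Cherny to the weak uniqueness result of \cite{KM} coincides with the paper's argument. For the class \eqref{krylov} the paper does not redo any PDE analysis; it simply invokes the weak uniqueness theorem of \cite{Ki_Morrey}, which covers time-inhomogeneous Morrey drifts and in particular $|b|\in L^{\frac{d+2}{2}+\varepsilon}$. Your alternative---building $u$ with $\partial_t u+\tfrac12\Delta u+b\cdot\nabla u=-h$ by truncating $b$ and absorbing the small-norm piece into parabolic $L^\mu$-regularity---is morally what \cite{Ki_Morrey} does, but your sketch leaves the decisive estimate open: at the barely subcritical integrability $|b|\in L^{\frac{d+2}{2}+\varepsilon}$ one does not get $\nabla u$ in $L^p$ for $p$ large merely from the standard $L^\mu$ Calder\'on--Zygmund estimate, so the justification of It\^o's formula against an arbitrary weak solution in $\mathcal K$ (which you flag as a ``technical point'') is in fact the heart of the matter and is not supplied. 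The shortcut you mention---showing \eqref{krylov} $\Rightarrow$ \eqref{krylov0} by H\"older---also fails at face value: \eqref{krylov0} with $\mathsf g=b$ requires control of $\|b|h|^{q/2}\|_{L^2}$, and $|b|\in L^{\frac{d+2}{2}+\varepsilon}$ combined with $h\in L^\mu$ does not yield this for $q>d$.
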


The proof of Theorem \ref{thm1} follows closely \cite{RZ}, except the proof of Proposition \ref{prop1} (this is Lemma 4.2(a) in \cite{RZ}). In \cite{RZ}, this result is proved using Sobolev regularity estimates for solutions of parabolic equations with distributional right-hand side (these estimates, developing earlier work of Krylov, are quite strong and are interesting on their own). We prove Proposition \ref{prop1} using a simpler argument which uses weaker estimates on solutions of parabolic equations, and thus allows to treat a larger class of form-bounded drifts. We also use some estimates from paper \cite{KM} that deals with weak well-posedness of SDE \eqref{sde1} with drift $b \in \mathbf{F}_\delta$.

\medskip

It should be added that for the drifts $b \in C([0,T],L^d)$ or $b \in \eqref{LPS}$ ($2<p<\infty$) considered in \cite{RZ} the form-bound $\delta$ can be chosen arbitrarily small. In other words, replacing drift $b$ by $cb$, for arbitrarily large constant $c$, does not affect strong well-posedness of SDE \eqref{sde1}. The latter is important in \cite{RZ} since they apply their strong well-posedness result to  Navier-Stokes equations.

\medskip

One can also prove strong well-posedness of SDE \eqref{sde1} with form-bounded drift $b=b(x)$ using the approach of \cite{BFGM}, but only for a.e.\,$x \in \mathbb R^d$, see \cite{KiSS_transport}.

\begin{remark}[On weak solutions] Weak existence and uniqueness for \eqref{sde1} is known to hold for larger classes of drifts than the class $\mathbf{F}_\delta$, see \cite{KiS_brownian} dealing with weakly form-bounded drifts (time-homogeneous case) and \cite{Ki_Morrey} dealing with time-inhomogeneous drifts in essentially the largest possible Morrey class.  See also \cite{RZ_weak}. In a recent paper \cite{Kr_Morrey_sdes}, Krylov proved weak existence and uniqueness for SDEs with VMO diffusion coefficients and time-inhomogeneous drift in a large Morrey class containing \eqref{LPS} (in terms of Example \ref{morrey_ex}, this is the Morrey class with exponent $2+\varepsilon$ replaced by $\frac{d}{2}+\varepsilon$; note that in dimension $d=3$ Krylov's Morrey class is larger than $\mathbf{F}_\delta$). 
We refer to \cite{RZ_weak} for a survey of the literature on weak solutions of \eqref{sde1}.
\end{remark}

\bigskip

\section{Proof of Theorem \ref{thm1}}

\subsection{Notations} Set $\Delta_n(T_0,T_1):=\{(t_1,\dots,t_n) \mid T_0 \leq t_1 \leq \dots \leq t_n\ \leq T_1\}$ and put $\Delta_n(T):=\Delta_n(0,T)$.

Let $\nabla_{i}:=\partial_{x_i}$, $x=(x_1,\dots,x_d) \in \mathbb R^d$. 

Let $\mathbf{E}_{\mathcal F_t}$ denote conditional expectation with respect to $\sigma$-algebra $\mathcal F_t$.

Put
$$
\langle f,g\rangle = \langle f g\rangle :=\int_{\mathbb R^d}f gdx.$$

\subsection{Some estimates}
Let $f_i \in L^2_{\loc}(\mathbb R^{d+1})$ ($i \geq 1$) be form-bounded:
\begin{equation}
\label{fbb_fi}
\|f_i(t,\cdot)\varphi\|_2^2   \leq \nu \|\nabla \varphi\|_2^2
\end{equation}
for some $\nu>0$. Also, in this section, $f_i$ are smooth. Additionally, let us assume that:

\medskip

($A'$) all $f_i$  have compact supports contained in $\mathbb R \times B_R(0)$ for a fixed $R>0$ (independent of $i$). 

\medskip

In this subsection, $b \in \mathbf{F}_\delta$ is additionally assumed to be smooth. However, the constants in the estimates below will not depend on smoothness or boundedness of $b$ and $f_i$.

\medskip

By the classical theory, there exists a unique strong solution $X_t^x$ to
$$
X_t^x=x+\int_0^t b(\tau,X_\tau^x)d\tau + W_t.
$$

\medskip

\medskip

Let $0 \leq T_0 \leq T_1 \leq T$.

\begin{proposition}
\label{prop1}
There exist positive constants $C_0$, $K$ such that, for every $n \geq 1$,
$$
\int_{\mathbb R^d }\left|\mathbf{E} \int_{\Delta_{n}(T_0,T_1)} \prod_{i=1}^n \nabla_{\alpha_i} f_i(t_i, X_{t_i}^x)dt_1 \dots dt_n \right|^2 dx \leq C_0K^n(T_1-T_0),
$$
where $1 \leq \alpha_i \leq d$ ($i \geq 1$).
Moreover, $K$ can be made as small as needed by assuming that form-bounds $\delta$ and $\nu$ in \eqref{fbb_inhom}, \eqref{fbb_fi} are sufficiently small.
\end{proposition}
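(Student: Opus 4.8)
The plan is to collapse the $n$-fold time integral by repeatedly conditioning on the Brownian filtration, which reduces the claim to an $L^2$ a priori bound along a chain of backward Kolmogorov equations. Since in this subsection $b$ and the $f_i$ are smooth with compact support, $X^x_{s,t}$ is the classical flow of a Markov process; write $P_{s,t}h(x)=\mathbf E[h(X^x_{s,t})]$ for its transition operator, with generator $L_s:=\tfrac12\Delta+b(s,\cdot)\cdot\nabla$. Put $\phi_0\equiv1$ and, for $1\le k\le n$,
\[
\phi_k(s,x):=\mathbf E\int_s^{T_1}\nabla_{\alpha_{n-k+1}}f_{n-k+1}\bigl(t,X^x_{s,t}\bigr)\,\phi_{k-1}\bigl(t,X^x_{s,t}\bigr)\,dt .
\]
Integrating out the largest time variable first, then the next largest, and so on, and using the Markov property at each step yields $\mathbf E\bigl[\int_{\Delta_n(T_0,T_1)}\prod_{i=1}^n\nabla_{\alpha_i}f_i(t_i,X^x_{t_i})\,dt\,\big|\,\mathcal F_{T_0}\bigr]=\phi_n(T_0,X^x_{T_0})$, so the left-hand side of the Proposition equals $\|P_{0,T_0}\phi_n(T_0,\cdot)\|_2^2$. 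For $\delta$ small $P_{0,T_0}$ is an $L^2$-contraction (apply the energy identity used below to $(\partial_s+L_s)v=0$: the drift term $\langle b\cdot\nabla v,v\rangle$ is absorbed by $\tfrac12\|\nabla v\|_2^2$), so it suffices to prove $\|\phi_n(T_0,\cdot)\|_2^2\le C_0K^n(T_1-T_0)$. Differentiating the Duhamel identity in $s$, each $\phi_k$ solves
\[
\partial_s\phi_k+\tfrac12\Delta\phi_k+b\cdot\nabla\phi_k=-\nabla_{\alpha_{n-k+1}}f_{n-k+1}\,\phi_{k-1},\qquad \phi_k(T_1,\cdot)=0 ,
\]
and smoothness together with the compact support of $b$ and the $f_i$ (or, alternatively, the a priori bounds of \cite{KM}) guarantee that $\phi_k(s,\cdot)$ and $\nabla\phi_k(s,\cdot)$ lie in $L^2$ with enough spatial decay to justify the integrations by parts below.

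The next step is the energy estimate for each $k$. Pairing the equation for $\phi_k$ with $\phi_k$, integrating over $[s,T_1]\times\mathbb R^d$, and using $\phi_k(T_1,\cdot)=0$ and $\langle\tfrac12\Delta\phi_k,\phi_k\rangle=-\tfrac12\|\nabla\phi_k\|_2^2$ gives
\[
\tfrac12\|\phi_k(s)\|_2^2+\tfrac12\int_s^{T_1}\|\nabla\phi_k\|_2^2\,dr=\int_s^{T_1}\langle b\cdot\nabla\phi_k,\phi_k\rangle\,dr+\int_s^{T_1}\bigl\langle\nabla_{\alpha_{n-k+1}}f_{n-k+1}\,\phi_{k-1},\phi_k\bigr\rangle\,dr .
\]
By \eqref{fbb_inhom} with $g_\delta=0$, $|\langle b\cdot\nabla\phi_k,\phi_k\rangle|\le\|\,|b|\,\phi_k\|_2\,\|\nabla\phi_k\|_2\le\sqrt\delta\,\|\nabla\phi_k\|_2^2$. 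For the last term, integrate by parts in $x$ (writing $f:=f_{n-k+1}$):
\[
\bigl\langle\nabla_{\alpha}f\,\phi_{k-1},\phi_k\bigr\rangle=-\langle f\phi_k,\nabla_{\alpha}\phi_{k-1}\rangle-\langle f\phi_{k-1},\nabla_{\alpha}\phi_k\rangle ,
\]
and estimate each factor by \eqref{fbb_fi}: $\|f\phi_k\|_2\le\sqrt\nu\,\|\nabla\phi_k\|_2$, $\|f\phi_{k-1}\|_2\le\sqrt\nu\,\|\nabla\phi_{k-1}\|_2$, so this term is $\le2\sqrt\nu\,\|\nabla\phi_{k-1}\|_2\,\|\nabla\phi_k\|_2$. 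Set $A_k:=\int_{T_0}^{T_1}\|\nabla\phi_k\|_2^2\,dr$. Using Cauchy--Schwarz in time, for $k\ge2$,
\[
\tfrac12\sup_{T_0\le s\le T_1}\|\phi_k(s)\|_2^2+\tfrac12A_k\le\sqrt\delta\,A_k+2\sqrt\nu\,\sqrt{A_{k-1}A_k}.
\]
For $k=1$, $\phi_0\equiv1$, so the last term is $-\langle f_n,\nabla_{\alpha_n}\phi_1\rangle$; since $f_n(t,\cdot)$ is supported in $B_R(0)$, fixing $\zeta\in C_c^\infty$ with $\zeta=1$ on $B_R(0)$ and applying \eqref{fbb_fi} to $\zeta$ gives $\|f_n(t,\cdot)\|_2^2=\|f_n(t,\cdot)\zeta\|_2^2\le\nu\|\nabla\zeta\|_2^2$, hence
\[
\tfrac12\sup_{T_0\le s\le T_1}\|\phi_1(s)\|_2^2+\tfrac12A_1\le\sqrt\delta\,A_1+\sqrt\nu\,\|\nabla\zeta\|_2\,(T_1-T_0)^{1/2}A_1^{1/2}.
\]

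It then remains to close the induction. Fix $\delta$ so small that $\sqrt\delta\le\tfrac14$. The $k=1$ inequality gives $A_1\le16\,\nu\,\|\nabla\zeta\|_2^2\,(T_1-T_0)$ and $\sup_s\|\phi_1(s)\|_2^2\lesssim\nu\,\|\nabla\zeta\|_2^2\,(T_1-T_0)$; for $k\ge2$, absorbing $\sqrt\delta A_k$ into $\tfrac12A_k$ gives $A_k\le64\,\nu\,A_{k-1}$, and then $\sup_s\|\phi_k(s)\|_2^2\lesssim\nu\,A_{k-1}$. Iterating, $A_k\le(64\nu)^{k-1}A_1$ and hence $\sup_s\|\phi_k(s)\|_2^2\le C_0\,(64\nu)^k\,(T_1-T_0)$ with $C_0$ depending only on $d$ and $R$ (through $\|\nabla\zeta\|_2$). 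Taking $k=n$, $s=T_0$ gives the claim with $K=64\nu$, which is as small as needed once $\nu$ (and $\delta$) is small. If $g_\delta\not\equiv0$ there is an extra term $\int_s^{T_1}g_\delta\,\|\phi_k\|_2^2\,dr$ on the right of the energy identity, absorbed by Gr\"onwall's inequality at the cost of a factor $\exp(\int_{T_0}^{T_1}g_\delta)$ in $C_0$; this is Remark \ref{g_rem}.

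The main obstacle is the derivative falling on $f_i$: the source term in the equation for $\phi_k$ is a distribution, and one integration by parts couples $\phi_k$ to $\nabla\phi_{k-1}$ rather than to $\phi_{k-1}$, which forces the induction to propagate $\int\|\nabla\phi_k\|_2^2$ rather than $\int\|\phi_k\|_2^2$. Making the energy estimate close with a geometric ratio — the one good term $\tfrac12\int\|\nabla\phi_k\|_2^2$ must simultaneously absorb the drift term $\langle b\cdot\nabla\phi_k,\phi_k\rangle$ and furnish the Cauchy--Schwarz pairing against $\sqrt\nu\,\|\nabla\phi_{k-1}\|_2$ — is where smallness of both $\delta$ and $\nu$ is used, and is the point at which this argument departs from (and is softer than) the parabolic Sobolev-regularity estimates of \cite{RZ}.
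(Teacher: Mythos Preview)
Your proof is correct and follows essentially the same approach as the paper: both collapse the iterated integral into a chain of backward Kolmogorov equations (your $\phi_k$ is the paper's $u_{n-k+1}$), perform the same energy estimate---integrate by parts on the distributional source, use form-boundedness of $b$ and $f_i$ to absorb drift and cross terms---iterate to obtain a geometric ratio controlled by $\nu$, and close the base case via the compact-support cutoff $\zeta$. The only cosmetic difference is that you reduce $\int|\mathbf E[\phi_n(T_0,X^x_{T_0})]|^2dx$ to $\|\phi_n(T_0,\cdot)\|_2^2$ by invoking $L^2$-contractivity of $P_{0,T_0}$ directly, whereas the paper glues the backward and forward evolutions into a single forward equation for $U$ on $[0,T_1]$ and estimates $\|U(T_1,\cdot)\|_2^2$; both accomplish the same thing.
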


\begin{proof}
Fix $n$, put $u_{n+1}=1$ and define consecutively $$g_k=(\nabla_{\alpha_k} f_k)u_{k+1}, \quad k=1,\dots,n,$$
where $u_k$ solves the terminal-value problem on $[T_0,T_1]$
\begin{equation}
\label{eq_uk}
\partial_t u_k + \frac{1}{2}\Delta u_k + b \cdot \nabla u_k+g_k=0, \quad 
u_k(T_1)=0.
\end{equation}
Then, repeating the argument in \cite[Proof of Lemma 4.2]{RZ},
$$
\mathbf{E}_{\mathcal F_{T_0}} \int_{\Delta_{n}(T_0,T_1)} \prod_{i=1}^n \nabla_{\alpha_i} f_i(t_i, X_{t_i}^x)dt_1 \dots dt_n=u_1(T_0,X_{T_0}^x).
$$
Again as in \cite{RZ}, let $U$ be the solution to the initial-value problem on $[0,T_1]$,
\begin{equation}
\label{eq1}
\partial_t U - \frac{1}{2}\Delta U - B \cdot \nabla U - G=0, \quad U(0)=0,
\end{equation}
where
$$
B(t,\cdot)=b(T_1-t,\cdot)\mathbf{1}_{[0,T_1-T_0]}(t) + b(t+T_0-T_1,\cdot)\mathbf{1}_{]T_1-T_0,T_1]}(t), 
$$
$$
 G(t,\cdot):=g_1(T_1-t,\cdot)\mathbf{1}_{[0,T_1-T_0]}(t).
$$
One has
$
U(t,\cdot)=u_1(T_1-t,\cdot)$, $t \in [0,T_1-T_0]
$.
Further, $V(t,x):=U(t+T_1-T_0)$ solves on $[0,T_0]$
$$
\partial_t V -\frac{1}{2}\Delta V - b \cdot \nabla V=0, \quad V(0,\cdot)=U(T_1-T_0,\cdot)=u_1(T_0,\cdot).
$$
Therefore, 
\begin{align*}
&\int_{\mathbb R^d }\left|\mathbf{E} \int_{\Delta_{n}(T_0,T_1)} \prod_{i=1}^n \nabla_{\alpha_i} f_i(t_i, X_{t_i}^x)dt_1 \dots dt_n \right|^2 dx \\
&=\int_{\mathbb R^d } |\mathbf{E}u_1(T_0,X_{T_0}^x)|^2 dx = \int_{\mathbb R^d } |V(T_0,x)|^2 dx = \|U(T_1,\cdot)\|_2^2.
\end{align*}
We estimate $\|U(T_1,\cdot)\|_2^2$ in three steps:

1.~We multiply equation \eqref{eq1} by $U$ and integrate over $[0,T_1] \times \mathbb R^d$, arriving at 
\begin{align}
\label{intEq}
\frac{1}{2}\langle U^2(T_1,\cdot)\rangle - 0  + \frac{1}{2}\int_0^{T_1}\langle |\nabla U|^2 \rangle ds &=  \int_0^{T_1} \langle B \cdot \nabla U,U  \rangle ds \\ \nonumber
&+ \int_0^{T_1} \langle g_1(T_1-s,\cdot)\mathbf{1}_{[0,T_1-T_0]}(s),U(s)\rangle ds.
\end{align}
The first term in the RHS  of \eqref{intEq} is estimated, using  the quadratic inequality $ac \leq \frac{1}{2\sqrt{\delta}}a^2 + \frac{\sqrt{\delta}}{2}c^2$ and the form-boundedness $b  \in \mathbf{F}_\delta$, as follows:
\begin{align}
 \int_0^{T_1} \langle B \cdot \nabla U,U  \rangle ds & \leq \frac{1}{2\sqrt{\delta}}  \int_0^{T_1} \langle B^2,U^2 \rangle ds + \frac{\sqrt{\delta}}{2} \int_0^{T_1}\langle |\nabla U|^2 \rangle ds \notag \\ 
& \leq \sqrt{\delta} \int_0^{T_1}\langle |\nabla U|^2 \rangle ds. \label{B_est}
\end{align} 
The second term in the RHS of \eqref{intEq}:
\begin{align*}
 \int_0^{T_1} \langle g_1(T_1-s,\cdot)& \mathbf{1}_{[0,T_1-T_0]}(s),U(s)\rangle  ds  = \int_0^{T_1} \langle \nabla_{\alpha_1} f_1(T_1-s, \cdot), u_2(T_1-s,\cdot)\mathbf{1}_{[0,T_1-T_0]}(s) U(s)\rangle ds \\
& = - \int_0^{T_1} \langle f_1(T_1-s, \cdot), (\nabla_{\alpha_1}  u_2(T_1-s,\cdot))\mathbf{1}_{[0,T_1-T_0]}(s) U(s,\cdot)\rangle ds \\
& -  \int_0^{T_1} \langle f_1(T_1-s, \cdot),  u_2(T_1-s,\cdot)\mathbf{1}_{[0,T_1-T_0]}(s) \nabla_{\alpha_1}  U(s,\cdot)\rangle ds \\
& (\text{we are applying quadratic inequality twice; fix some $\varepsilon$, $\beta>0$}) \\
& \leq  \varepsilon \int_0^{T_1} \langle f_1^2(T_1-s, \cdot) U^2(s,\cdot) \rangle ds + \frac{1}{4\varepsilon} \int_0^{T_1} \langle |\nabla_{\alpha_1}  u_2(T_1-s,\cdot)|^2\mathbf{1}_{[0,T_1-T_0]}(s) \rangle ds \\
&+ \beta\int_0^{T_1} \langle f_1^2(T_1-s, \cdot), u^2_2(T_1-s,\cdot)\mathbf{1}_{[0,T_1-T_0]}(s)  \rangle ds + \frac{1}{4\beta} \int_0^{T_1} \langle |\nabla_{\alpha_1}  U(s,\cdot)|^2\rangle ds.
\end{align*}
Therefore, taking into account the indicator function of $[0,T_1-T_0]$, and using the form-boundedness assumption \eqref{fbb_fi} on $f_i$, we obtain
\begin{align}
  \int_0^{T_1} \langle g_1(T_1-s,\cdot) \mathbf{1}_{[0,T_1-T_0]}(s), &U(s,\cdot)\rangle  ds  
 \leq \varepsilon \int_0^{T_1} \langle f_1^2(T_1-s, \cdot) U^2(s) \rangle ds + \frac{1}{4\varepsilon} \int_{T_0}^{T_1} \langle |\nabla_{\alpha_1}  u_2(s,\cdot)|^2 \rangle ds  \notag \\
&+ \beta\int_{T_0}^{T_1} \langle f_1^2(s, \cdot), u^2_2(s,\cdot)\rangle ds + \frac{1}{4\beta} \int_0^{T_1} \langle |\nabla_{\alpha_1}  U(s,\cdot)|^2\rangle ds \label{U_fb}\\
&\leq \bigg(\varepsilon \nu + \frac{1}{4\beta} \bigg) \int_0^{T_1} \langle |\nabla  U(s,\cdot)|^2\rangle ds + \bigg(\beta \nu + \frac{1}{4\varepsilon} \bigg)\int_{T_0}^{T_1} \langle |\nabla  u_2(s,\cdot)|^2 \rangle ds. \notag
\end{align}
Thus, we obtain from \eqref{intEq}:
\begin{align*}
\frac{1}{2}\langle U^2(T_1)\rangle  &  + \biggl(\frac{1}{2}-\sqrt{\delta}-\varepsilon \nu-\frac{1}{4\beta} \biggr)  \int_0^{T_1} \langle |\nabla U(s)|^2\rangle ds 
\leq \biggl(\beta \nu + \frac{1}{4\varepsilon} \biggr) \int_{T_0}^{T_1} \langle |\nabla  u_2(s)|^2 \rangle ds.
\end{align*}
Now, selecting $\varepsilon$ and $\beta$ large, and requiring the form-bounds $\delta$ and $\nu$ to be sufficiently small, we arrive at
\begin{align}
\label{U_est}
\langle U^2(T_1)\rangle  + C_1\int_0^{T_1}\langle |\nabla U(s)|^2 \rangle ds\leq C_2 \int_{T_0}^{T_1} \langle |\nabla u_2(s)|^2\rangle  ds
\end{align}
for constants $0<C_2<C_1$ independent of smoothness or boundedness of $b$ and $f_i$. Moreover, it is clear that we can make $\frac{C_2}{C_1}$ arbitrarily small by selecting $\delta$ and $\nu$ even smaller.

\medskip

2.~Now, we repeat this procedure for $u_2$ in place of $U$. That is, we multiply equation \eqref{eq_uk} (for $k=2$) by $u_2$ and integrate over $[T_0,T_1] \times \mathbb R^d$ to obtain
$$
\frac{1}{2}\langle u_2^2(T_0)\rangle + \frac{1}{2}\int_{T_0}^{T_1}\langle |\nabla u_2|^2 \rangle ds =  \int_{T_0}^{T_1} \langle b \cdot \nabla u_2,u_2\rangle ds + \int_{T_0}^{T_1} \langle g_2,u_2\rangle ds.
$$
We estimate the first term in the RHS as in \eqref{B_est}, using quadratic inequality and the assumption $b \in \mathbf{F}_\delta$.
The second term in the RHS:
\begin{align*}
\int_{T_0}^{T_1} \langle g_2,u_2\rangle ds & =\int_{T_0}^{T_1} \langle (\nabla_{\alpha_2} f_2)u_3,u_2 \rangle ds \\
& \leq - \int_{T_0}^{T_1} \langle f_2,(\nabla_{\alpha_2}u_3)u_2\rangle ds - \int_{T_0}^{T_1} \langle f_2,u_3 \nabla_{\alpha_2}u_2\rangle ds \\
& \leq \varepsilon \int_{T_0}^{T_1} \langle f_2^2, u_2^2 \rangle ds + \frac{1}{4\varepsilon} \int_{T_0}^{T_1} \langle |\nabla_{\alpha_2}  u_3(s,\cdot)|^2 \rangle ds \\
&+ \beta\int_{T_0}^{T_1} \langle f_2^2, u^2_3\rangle ds + \frac{1}{4\beta} \int_{T_0}^{T_1} \langle |\nabla_{\alpha_2}  u_2|^2\rangle ds\\
& (\text{we are using $f_2 \in \mathbf{F}_\nu$}) \\
& \leq \bigg(\varepsilon \nu + \frac{1}{4\beta} \bigg) \int_{T_0}^{T_1} \langle |\nabla  u_3(s)|^2\rangle ds + \bigg(\beta \nu + \frac{1}{4\varepsilon} \bigg)\int_{T_0}^{T_1} \langle |\nabla  u_2(s)|^2 \rangle ds,
\end{align*}
as in the previous step. Thus, we arrive at
$$
\int_{T_0}^{T_1}\langle |\nabla u_2|^2 \rangle ds \leq \frac{C_2}{C_1} \int_{T_0}^{T_1} \langle |\nabla u_3|^2\rangle  ds.
$$
If $n > 3$, we repeat this $n-3$ more times:
$$
\int_{T_0}^{T_1}\langle |\nabla u_{2}|^2 \rangle  ds\leq \biggl(\frac{C_2}{C_1}\biggr)^{n-2} \int_{T_0}^{T_1} \langle |\nabla u_n|^2\rangle  ds
$$
and so, in view of \eqref{U_est},
$$
\langle U^2(T_1)\rangle \leq C_2  \biggl(\frac{C_2}{C_1}\biggr)^{n-2} \int_{T_0}^{T_1} \langle |\nabla u_n|^2\rangle  ds.
$$

\medskip

3.~Finally, we estimate $\int_{T_0}^{T_1}\langle |\nabla u_n(s)|^2\rangle  ds$. Arguing as above,
we have (recall that $u_{n+1}=1$)
\begin{align*}
\int_{T_0}^{T_1}\langle |\nabla u_n(s)|^2 \rangle ds & \leq C_3\int_{T_0}^{T_1} \langle \nabla_{\alpha_n} 
f_n(s,\cdot),  u_n(s,\cdot)\rangle ds \\
& = - C_3\int_{T_0}^{T_1} \langle  f_n(s,\cdot), \nabla_{\alpha_n} u_n(s,\cdot)\rangle ds \\
& (\text{we are applying quadratic inequality}) \\
& \leq C_4\int_{T_0}^{T_1} \langle f_n^2 \rangle ds + \frac{1}{2}\int_{T_0}^{T_1} \langle |\nabla u_n(s)|^2 \rangle ds \\
& (\text{we are using assumption ($A'$) that all $f_i$ have support in $B_R(0)$,} \\
& \text{and apply \eqref{fbb_fi} to $\int_{T_0}^{T_1} \langle f_n^2 \varphi^2\rangle ds \geq \int_{T_0}^{T_1} \langle f_n^2 \rangle ds$ for a smooth $\varphi \geq \mathbf{1}_{B_R(0)}$}) \\
& \leq C_5(T_1-T_0) + \frac{1}{2}\int_{T_0}^{T_1} \langle |\nabla u_n(s)|^2  \rangle ds.
\end{align*}
Thus, $\frac{1}{2}\int_{T_0}^{T_1}\langle |\nabla u_n(s)|^2 \rangle ds \leq  C_5(T_1-T_0)$.
Combining this with the previous estimate, we obtain
$
\langle U^2(T_1)\rangle \leq C_2 \bigl(\frac{C_2}{C_1} \bigr)^{n-2} 2 C_5 (T_1-T_0),
$
which gives the required estimate with $K:=\frac{C_2}{C_1}$.
\end{proof}

\begin{remark}
\label{g_rem}
Let us comment on what happens if in Theorem \ref{thm1} we assume that $g_\delta$ is non-zero. We have to assume that $$0 \leq g_\delta \in L^{1+\varepsilon}_{\loc}(\mathbb R), \quad \text{ for a fixed $\varepsilon>0$}.$$ (It should be added that this $\varepsilon>0$ does not allow to include completely the critical Ladyzhenskaya-Prodi-Serrin class \eqref{LPS} even with $p>2$ there, as is assumed in \cite{RZ}. It does include, however, the case that interests us the most: $p=\infty$, $q=d$. It also includes with case $p>2$, $q=\infty$). 

Only the proof of Proposition \ref{prop1} has to be changed, where we assume in \eqref{fbb_fi} $0 \leq g_\nu \in L^{1+\varepsilon}_{\loc}(\mathbb R)$. Then the estimate of Proposition \ref{prop1} changes to
\begin{equation}
\label{prod_est2}
\int_{\mathbb R^d }\left|\mathbf{E} \int_{\Delta_{n}(T_0,T_1)} \prod_{i=1}^n \nabla_{\alpha_i} f_i(t_i, X_{t_i}^x)dt_1 \dots dt_n \right|^2 dx \leq C'_0K^n(T_1-T_0)^{\frac{\varepsilon}{1+\varepsilon}},
\end{equation}
which does not affect the validity of the result of the proof. The proof of \eqref{prod_est2} goes as follows.
Put $F(t):=\lambda \int_0^t \big[g_\delta(s)+g_\nu(s)\big]ds$, where $\lambda$ is to be fixed sufficiently large (depending on the values of $\delta$ and $\nu$). We multiply equation \eqref{eq1} for $U$ by $e^{-F}$, obtaining
$$
\partial_t (e^{-F}U) + F'e^{-F}U - \frac{1}{2}\Delta e^{-F}U - B \cdot \nabla e^{-F}U - e^{-F}G=0, \quad U(0)=0,
$$
where $e^{-F}G=(\partial_{\alpha_1}f_1(T_1-t,\cdot))\mathbf{1}_{[0,T_1-T_0]}(t)e^{-F(t)}u_2(T_1-t,\cdot)$. 
After multiplying the previous equation by $U$, integrating and fixing $\lambda>\frac{1}{2\sqrt{\delta}} + \varepsilon + \beta$, one sees that the term
$$
\int_0^{T_1} \langle F'e^{-F}U^2 \rangle ds= \lambda \int_0^{T_1} \langle (g_\delta+g_\nu)e^{-F}U^2 \rangle ds
$$
will
absorb the ``new'' terms $\frac{1}{2\sqrt{\delta}}\int_0^{T_1}\langle g_\delta e^{-F} U^2\rangle ds$ and $(\varepsilon + \beta)\int_{0}^{T_1}\langle g_\nu e^{-F}U^2 \rangle ds$ that will now appear in \eqref{B_est} and \eqref{U_fb}. This will give us, instead of \eqref{U_est}, the estimate:
$$
\langle e^{-F(T_1)}U^2(T_1)\rangle  + C_1\int_0^{T_1}\langle e^{-F}|\nabla U|^2 \rangle ds\leq C_2 \int_{T_0}^{T_1} \langle e^{-\tilde{F}}|\nabla u_2(s)|^2\rangle  ds,
$$
where $\tilde{F}(t):=F(T_1-s)$.

In turn, the multiple $e^{-\tilde{F}(t)}$ factors through all equations \eqref{eq_uk} with the same effect of absorbing the ``new'' terms containing $g_\delta$ and $g_\nu$, that is, we get
$$
\int_{T_0}^{T_1}\langle e^{-\tilde{F}}|\nabla u_2|^2 \rangle ds \leq \frac{C_2}{C_1} \int_{T_0}^{T_1} \langle e^{-\tilde{F}}|\nabla u_3|^2\rangle  ds,
$$
and 
so on: 
$$
\int_{T_0}^{T_1}\langle |e^{-\tilde{F}}\nabla u_{2}|^2 \rangle  ds\leq \biggl(\frac{C_2}{C_1}\biggr)^{n-2} \int_{T_0}^{T_1} \langle e^{-\tilde{F}}|\nabla u_n|^2\rangle  ds.
$$
 Finally, $e^{-\tilde{F}}$ does not affect the estimate on $u_n$, only the constant $C_5$. Thus, we arrive at \eqref{prod_est2} with the same constant $K$ that does not depend on $g_\delta$ or $g_\nu$.
\end{remark}

\bigskip

For a given vector field $Y=(Y_i)_{i=1}^d:\mathbb R^{k} \rightarrow \mathbb R^m$, denote 
\begin{equation}
\label{nabla_Y}
\nabla Y=\nabla_x Y(x):=\left( 
\begin{array}{cccc}
\nabla_1 Y_1 & \nabla_2 Y_1 & \dots & \nabla_k Y_1 \\

& &  \dots & \\

\nabla_1 Y_m & \nabla_2 Y_m & \dots & \nabla_k Y_m
\end{array}
\right).
\end{equation}

\begin{proposition}
\label{prop2}
For every $r \geq 1$, there exist constants $K_1$, $K_2$ (independent of smoothness or boundedness of $b$) such that

\medskip

{\rm (\textit{i})} $\|\nabla X_t^x - I\|_{L^{2r}(\mathbb R^d,L^r(\Omega))} \leq K_1 t^{\frac{1}{2r}}$ for all $0 \leq t \leq T$;

\medskip

{\rm (\textit{ii})} $\|D_sX_t^x - I\|_{L^{2r}(\mathbb R^d,L^r(\Omega))} \leq K_1(t-s)^{\frac{1}{4r}}$ for a.e.\,$s \in [0,T]$ and $0 \leq s \leq t \leq T$;

\medskip

{\rm (\textit{iii})} $\|D_s X_t^x - D_{s'}X_t^x\|_{L^{2r}(\mathbb R^d,L^r(\Omega))} \leq K_2|s-s'|^{\frac{1}{4r}}$ for a.e.\,$s,s' \in [0,T]$ and $0 \leq s,s' \leq t \leq T$.
\end{proposition}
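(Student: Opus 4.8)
Throughout this subsection $b$ is smooth, so $X^x_t$ depends smoothly on $x$ and is Malliavin differentiable; differentiating the SDE gives the linear matrix equations
$$J_t=I+\int_0^t\nabla b(u,X^x_u)\,J_u\,du,\qquad D_sX^x_t=I+\int_s^t\nabla b(u,X^x_u)\,D_sX^x_u\,du\quad(t\ge s),$$
where $J_t:=\nabla X^x_t$ and $D_sX^x_t=0$ for $t<s$. The plan, following \cite{RZ}, is to expand both equations in their time-ordered Neumann series,
$$J_t-I=\sum_{n\ge1}\Xi_n(0,t),\qquad D_sX^x_t-I=\sum_{n\ge1}\Xi_n(s,t),\qquad\Xi_n(T_0,T_1):=\int_{\Delta_n(T_0,T_1)}\nabla b(t_n,X^x_{t_n})\cdots\nabla b(t_1,X^x_{t_1})\,dt_1\cdots dt_n,$$
and to estimate each $\Xi_n$ by means of Proposition \ref{prop1}. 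Each entry of the matrix $\Xi_n(T_0,T_1)$ is a sum, over the $d^{n-1}$ choices of intermediate matrix indices, of iterated integrals $\int_{\Delta_n(T_0,T_1)}\prod_{i=1}^n\nabla_{\alpha_i}b_{j_i}(t_i,X^x_{t_i})\,dt$; since $|b_j|\le|b|$ every component $b_j$ lies in $\mathbf F_\delta$, and by assumption ($A$) the $b_j$ are smooth with a common compact support, so Proposition \ref{prop1} applies with $f_i=b_{j_i}$ and $\nu=\delta$.

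For (\textit{i}), fix $r\ge1$ and write $\|Z\|:=\|Z\|_{L^{2r}(\mathbb R^d,L^r(\Omega))}$, so that $\|Z\|^{2r}=\int_{\mathbb R^d}(\mathbf E|Z(x)|^r)^2\,dx$. By Minkowski's inequality (in $\Omega$, then in $x$) it is enough to show $\|\Xi_n(T_0,T_1)\|\le C\rho^n(T_1-T_0)^{1/(2r)}$ with $\rho<1$ and to sum the geometric series. I would do this first for even $r$: expanding $|\Xi_n(T_0,T_1,x)|^r$ as a polynomial of degree $r$ in the matrix entries and invoking the shuffle-product identity (a product of iterated integrals over the same interval is a finite sum of iterated integrals over a simplex of the summed dimension) rewrites it as a positive linear combination of iterated integrals $\int_{\Delta_{rn}(T_0,T_1)}\prod_{i=1}^{rn}\nabla_{\alpha_i}b_{j_i}(t_i,X^x_{t_i})\,dt$, whose coefficients sum to at most $c(r,n,d):=d^{rn}(rn)!/(n!)^r$. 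Taking expectations, applying Proposition \ref{prop1} with $rn$ in place of $n$ to each term, and using Cauchy–Schwarz yields
$$\int_{\mathbb R^d}\bigl(\mathbf E|\Xi_n(T_0,T_1,x)|^r\bigr)^2\,dx\le c(r,n,d)^2\,C_0K^{rn}(T_1-T_0),$$
hence $\|\Xi_n(T_0,T_1)\|\le c(r,n,d)^{1/r}C_0^{1/(2r)}K^{n/2}(T_1-T_0)^{1/(2r)}$. A Stirling estimate shows that $c(r,n,d)^{1/r}$ grows at most geometrically in $n$, with ratio $\lesssim rd$, so taking $\delta$ — equivalently, by Proposition \ref{prop1}, $K$ — small enough depending on $r$ and $d$ makes $\sum_n c(r,n,d)^{1/r}K^{n/2}<\infty$; then $\|J_t-I\|\le\sum_n\|\Xi_n(0,t)\|\le K_1t^{1/(2r)}$, which is (\textit{i}), with $K_1$ independent of the smoothness of $b$ since those of Proposition \ref{prop1} are. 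General real $r\ge1$ is handled by the same method (the compact support of $b$ supplying the spatial integrability needed to interpolate down from an even integer).

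Part (\textit{ii}) is obtained identically with $T_0=s$, $T_1=t$, the simplex $\Delta_n(s,t)$ producing a factor $(t-s)^{1/(2r)}$; following \cite{RZ} we record only the weaker Hölder exponent $1/(4r)$ (which also absorbs the dependence on $T$, since $t-s\le T$). For (\textit{iii}), let $0\le s<s'\le t$. Because perturbing the driving noise at a time $s<s'$ leaves the increment of $W$ over $[s',t]$ unchanged, the flow property together with the Malliavin chain rule gives $D_sX^x_t=D_{s'}X^x_t\cdot D_sX^x_{s'}$, so
$$D_sX^x_t-D_{s'}X^x_t=\bigl(D_{s'}X^x_t-I\bigr)\bigl(D_sX^x_{s'}-I\bigr)+\bigl(D_sX^x_{s'}-I\bigr).$$
The second term is $\le K_1(s'-s)^{1/(4r)}$ by (\textit{ii}); the first is estimated by Hölder's inequality in $\Omega$ and in $x$, bounding $D_{s'}X^x_t-I$ in a higher mixed norm (which contributes only a constant, as $t-s'\le T$) and $D_sX^x_{s'}-I$ by (\textit{ii}), with the splitting of exponents as in \cite{RZ}; this gives (\textit{iii}).

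The step I expect to be the main obstacle is the passage, in (\textit{i}) and (\textit{ii}), from Proposition \ref{prop1} — which bounds $\int_{\mathbb R^d}|\mathbf E(\cdot)|^2\,dx$, with the absolute value \emph{outside} the expectation — to the mixed moment norm $L^{2r}(\mathbb R^d,L^r(\Omega))$, where it sits \emph{inside}. The shuffle-product reduction above does exactly this, but at the price of the combinatorial factor $c(r,n,d)$, and one must check (via Stirling) that $c(r,n,d)^{1/r}$ grows only geometrically in $n$, so that smallness of $\delta$ restores convergence of the Neumann series; this is also the reason $\delta$ must be taken genuinely small here, whereas in \cite{RZ} (drift in $C([0,T],L^d)$ or in \eqref{LPS}) the form-bound is automatically arbitrarily small. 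Otherwise the argument parallels \cite{RZ} line by line, the single new ingredient being Proposition \ref{prop1} itself, now available for the whole class $\mathbf F_\delta$.
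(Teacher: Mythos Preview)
Your argument is essentially the same as the paper's: Neumann series expansion of $\nabla X_t^x-I$ and $D_sX_t^x-I$, reduction of the $r$-th moment to iterated integrals over $\Delta_{rn}$ via the shuffle decomposition of $\Delta_n^r$, application of Proposition~\ref{prop1}, and geometric summation using that $K$ (hence $\delta$) is small. Your handling of (\textit{iii}) through the factorization $D_sX_t^x=D_{s'}X_t^x\cdot D_sX_{s'}^x$ together with H\"older is exactly what \cite{RZ} does and what the paper defers to. You are actually more careful than the paper on the combinatorics: you identify the shuffle count $c(r,n,d)=d^{rn}(rn)!/(n!)^r$ and note via Stirling that $c(r,n,d)^{1/r}\lesssim (rd)^n$, whereas the paper simply records a bound of the form $(C_6 r)^n$ without justification (its phrase ``at most $rn$ terms'' is, at best, shorthand). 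You also make explicit the subtlety that the shuffle reduction literally works only for even $r$; the paper does not address this. Your proposed remedy by interpolation is fine in spirit but a cleaner route is to use Jensen on the probability space $\Omega$: for any $r\ge1$ pick the smallest even integer $2m\ge r$, bound $\mathbf E|\Xi_n|^r\le(\mathbf E|\Xi_n|^{2m})^{r/(2m)}$, and then observe that the $L^2(\mathbb R^d)$ bound you already have on $\mathbf E|\Xi_n|^{2m}$ controls the $L^{r/m}(\mathbb R^d)$ norm once you have enough spatial decay---which the compact support of $b$ does provide, since $\Xi_n(x)\equiv0$ whenever the path $X^x$ avoids $\mathrm{supp}\,b$, an event of overwhelming probability for large $|x|$.
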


\begin{proof} The proof repeats \cite[Proof of Prop.\,4.1]{RZ} essentially word in word.
We give an outline of the proof of (\textit{i}).  Since $b$ is bounded and smooth, one has
$$
\nabla X_t^x-I=\int_0^t \nabla b(s,X_s^x)\nabla X_s^x ds.
$$
The goal is to iterate this identity, obtaining an expression for the left-hand side that one can control:
$$
\nabla X_t^x-I=\sum_{n=1}^\infty \int_{\Delta_n(t)} \prod_{i=1}^n \nabla b(t_i,X_{t_i}^x)dt_1\dots dt_n,
$$
so
\begin{equation}
\label{sum}
\|\nabla X_t^x-I\|_{L^{2r}(\mathbb R^d,L^r(\Omega))} \leq \sum_{n=1}^\infty \left\|\int_{\Delta_n(t)} \prod_{i=1}^n \nabla b(t_i,X_{t_i}^x) dt_1\dots dt_n \right\|_{L^{2r}(\mathbb R^d,L^r(\Omega))}.
\end{equation}

Let us estimate 
\begin{align*}
\|\int_{\Delta_n(t)} \prod_{i=1}^n \nabla b(t_i,X_{t_i}^x) dt_1\dots dt_n\|_{L^{2r}(\mathbb R^d,L^r(\Omega))}= \bigg[\int_{\mathbb R^d} \biggl[ \mathbf{E} \biggl( \int_{\Delta_n(t)} \prod_{i=1}^n \nabla b(t_i,X_{t_i}^x)dt_1\dots dt_n \biggr)^r\biggr]^2 dx\bigg]^{\frac{1}{2r}}.
\end{align*}
First, note that by subdividing $\Delta_n(t) \times \dots \times \Delta_n(t)$ ($r$ times) into sub-simplexes, and recalling definition \eqref{nabla_Y}, one can represent 
\begin{equation}
\label{int_1}
(\int_{\Delta_n(t)} \prod_{i=1}^n \nabla b(t_i,X_{t_i}^x) dt_1\dots dt_n)^r 
\end{equation}
as a sum of at most $rn$ terms of the form
\begin{equation}
\label{int_2}
\int_{\Delta_{rn}(t)} \prod_{i=1}^n \nabla_{\gamma_1} b_{\beta_1}(t_1,X_{t_1}^x)  \dots \nabla_{\gamma_{rn}} b_{\beta_{rn}}(t_{rn},X_{t_{rn}}^x) dt_1 \dots dt_{rn},
\end{equation}
so 
\begin{align*}
& \|\int_{\Delta_n(t)} \prod_{i=1}^n \nabla b(t_i,X_{t_i}^x) dt_1\dots dt_n\|_{L^{2r}(\mathbb R^d,L^r(\Omega))} \\& \leq \sum\bigg[\int_{\mathbb R^d} \biggl[ \sum_{\beta,\gamma}\mathbf{E} \int_{\Delta_{rn}(t)} \prod_{i=1}^n \nabla_{\gamma_1} b_{\beta_1}(t_1,X_{t_1}^x)  \dots \nabla_{\gamma_{rn}} b_{\beta_{rn}}(t_{rn},X_{t_{rn}}^x) dt_1 \dots dt_{rn} \biggr]^2 dx\bigg]^{\frac{1}{2r}} \\
& \leq \sum\bigg[\sum_{\beta,\gamma}\bigg\| \mathbf{E} \int_{\Delta_{rn}(t)} \prod_{i=1}^n \nabla_{\gamma_1} b_{\beta_1}(t_1,X_{t_1}^x)  \dots \nabla_{\gamma_{rn}} b_{\beta_{rn}}(t_{rn},X_{t_{rn}}^x) dt_1 \dots dt_{rn} \bigg\|_{L^2(\mathbb R^d)}\bigg]^{\frac{1}{r}},
\end{align*}
where both sums are finite (the first sum comes from the coordinate representation of the product of $n$ matrices $\nabla b(t_i,X^x_{t_i})$, the second sum contains $r^{rn}$ terms). Finally, applying Proposition \ref{prop1}, one obtains
\begin{align*}
\|\int_{\Delta_n(t)} \prod_{i=1}^n \nabla b(t_i,X_{t_i}^x) dt_1\dots dt_n\|_{L^{2r}(\mathbb R^d,L^r(\Omega))}
\leq (C_6 r)^n C_7^{\frac{1}{r}} K^{n} t^{\frac{1}{r}}.
\end{align*}
Now, recalling that $K$ can be made as small as needed by assuming that $\delta$ is sufficiently small, one has $\|\int_{\Delta_n(t)} \prod_{i=1}^n \nabla b(t_i,X_{t_i}^x) dt_1\dots dt_n\|_{L^{2r}(\mathbb R^d,L^r(\Omega))}
\leq C_8^n t^{\frac{1}{r}}$ for a positive constant $C_8<1$

Returning to \eqref{sum}, one obtains
$$
\|\nabla X_t^x-I\|_{L^{2r}(\mathbb R^d,L^r(\Omega))} \leq \sum_{n=1}^\infty C_8^n t^{\frac{1}{r}},
$$
as needed.

The Malliavin derivative $D_sX_t^x$ satisfies (see e.g.\,\cite{B})
$$
D_s X_t^x-I=\int_s^t \nabla b(\tau,X_\tau^x)D_s X_\tau^x d\tau,
$$
so one can iterate this identity and estimate $D_s X_t^x-I$ in the same way as above, which yields (\textit{ii}). The latter yields (\textit{iii}), see \cite[Proof of Prop.\,4.1]{RZ} for details.
\end{proof}

\subsection{Proof of Theorem \ref{thm1}} The proof repeats the argument in \cite{RZ}. However, since we will have to use some estimates and some convergence results established in \cite{KM}, we included the details for the ease of the reader. 

\medskip

We consider a general $b \in \mathbf{F}_\delta$ as in the assumptions of the theorem.
Let us fix an approximation $\{b_m\} \subset C^\infty(\mathbb R^{d+1},\mathbb R^d) \cap L^\infty(\mathbb R^{d+1},\mathbb R^d)$ of $b$:
\begin{equation}
\label{b_m_cond1}
b_m \rightarrow b \quad \text{ in $L^2_{\loc}(\mathbb R^{d+1}, \mathbb R^d)$ as $m \rightarrow \infty$} 
\end{equation}
and for all $t \in \mathbb R$
\begin{equation}
\label{b_m_cond2}
\|b_m(t)\varphi\|_2^2   \leq \delta \|\nabla \varphi\|_2^2
\end{equation}

\begin{example*}
It is easy to show that the following $b_m$, with $\varepsilon_m \downarrow 0$ sufficiently rapidly and $c_m \uparrow 1$ sufficiently slow, satisfy \eqref{b_m_cond1}, \eqref{b_m_cond2}. 
$$
b_m:=c_m E^{1+d}_{\varepsilon_m} (\mathbf 1_m b),
$$
where $\mathbf 1_m$ is the indicator of $\{(t,x) \in [0,T] \times \mathbb R^d \mid |b(t,x)| \leq m\}$,  $E^{1+d}_\varepsilon$ is the Friedrichs mollifier on $\mathbb R \times \mathbb R^d$, see details e.g.\,in \cite{KM}. Note that, by selecting $\varepsilon_n \downarrow 0$ rapidly, one can treat $b_m$ as essentially a cutoff of $b$.
\end{example*}

 (Of course, since by our assumption $b$ in this paper has compact support, in \eqref{b_m_cond1} one has convergence in $L^2(\mathbb R^{d+1},\mathbb R^d)$.)

\medskip

Let $\mathsf{f} \in \mathbf{F}_\nu$ be bounded and smooth with function $g_\nu=0$.
(Below we will need $\mathsf{f}=b_m$, in which case $\nu=\delta$, or $\mathsf{f}=b_m-b_k$, in which case $\nu=2\delta$.) Let us emphasize  that the constants in the estimates below do not depend on $n$ or boundedness or smoothness of $\mathsf{f}$. They will depend on the dimension $d$, $T$ and form-bounds $\delta$ and $\nu$.

\medskip

Since $b_n$ are bounded and smooth, by the classical theory there exists a unique continuous random field $X^m:\Delta_2(T) \times \mathbb R^d \times \Omega \rightarrow \mathbb R^d$ such that
\begin{equation}
\label{sde2}
X_{s,t}^{x,m}=x+\int_s^t b_m(s,X^{x,m}_{s,r})dr + W_t-W_s, \quad 0 \leq s \leq t \leq T, \quad x \in \mathbb R^d.
\end{equation}

By It\^{o}'s formula, for all $m,k=1,2,\dots$, $s \leq t_1 \leq t_2 \leq T$,
$$
-u_m(t_1,X_{s,t_1}^{x,m})=-\int_{t_1}^{t_2}|\mathsf{f}(t,X_{s,t}^{x,m})|dt+\int_{t_1}^{t_2}\nabla u_m(t,X_{s,t}^{x,m})dW_t,
$$
where $u_m(t)$, $s<t \leq t_2$ is the classical solution to
$$
\partial_t u_m+\frac{1}{2}\Delta u_m + b_m\cdot \nabla u_m=-|\mathsf{f}|, \quad u_m(t_2)=0.
$$

\medskip

The following estimates on $X^{x,m}_{s,t}$ and $u_m$ are valid:
\medskip

1) $$\sup_{m}\sup_{x \in \mathbb R^d}\mathbf{E}\big[\int_{t_1}^{t_2}|\mathsf{f}(t,X_{s,t}^{x,m})|dt\, \big|\, \mathcal F_{t_1}\big] \leq C(t_2-t_1).$$
Indeed,
\begin{align*}
\mathbf{E}\bigg[\int_{t_1}^{t_2}|\mathsf{f}(t,X_{s,t}^{x,m})|dt\, \big|\, \mathcal F_{t_1}\bigg] & =\mathbf{E}\bigg[u_m (t_1,X_{s,t_1}^{x,m})\,|\, \mathcal F_{t_1}\bigg] \\
& \leq \|u_m(t_1)\|_{\infty} \\
& (\text{we are applying \cite[Cor.\,6.4]{KM}}) \\
& \leq C'\sup_{z \in \mathbb Z^d}\|\mathsf{f} \sqrt{\rho_{z}}\|_{L^2([t_1,t_2],L^2)},
\end{align*}
where, recall, $\rho(x)  
=(1+\kappa |x|^2)^{-\theta}$ with $\theta>\frac{d}{2}$ and $\kappa>0$ fixed sufficiently small, 
$\rho_z(x)=\rho(x-z)$.
In turn, since $\mathsf{f} \in \mathbf{F}_\nu$, 
\begin{align*}
\|\mathsf{f} \sqrt{\rho_{z}}\|^2_{L^2([t_1,t_2],L^2)} & \leq \frac{\nu}{4}\int_{t_1}^{t_2} \langle \frac{|\nabla \rho_z|^2}{\rho_z}\rangle dt  \\
& \leq \frac{\nu}{4} (t_2-t_1)\|\nabla \rho /\sqrt{\rho}\|_2^2 \\
& (\text{we are using $|\nabla \rho| \leq \theta\sqrt{\kappa}\rho$,  $\|\sqrt{\rho}\|_2<\infty$}) \\
&  \leq C''(t_2-t_1).
\end{align*}
which gives us 1).

\medskip

2) As a consequence of estimate 1), one has, e.g.\,for every integer $r \geq 1$,
$$
\sup_{m} \sup_{x \in \mathbb R^d}\mathbb E\left|\int_{t_1}^{t_2} |\mathsf{f}(t,X^{x,m}_{s,t})|dt\right|^r \leq C_r(t_2-t_1)^{r},
$$
see proof in \cite[Cor.\,3.5]{ZZ} (first, one represents $\mathbb E\left|\int_{t_1}^{t_2} |\mathsf{f}(t,X^{x,m}_{s,t})|dt\right|^r$ as the expectation of a repeated integral over $\Delta(t_1,t_2)$,
cf.\,transition from \eqref{int_1} to \eqref{int_2}, and then uses 1) $r$ times).

\medskip

3) It follows from 2) (upon selecting $\mathsf{f}=b_m$) that
\begin{align*}
\mathbf{E}|X^{x,m}_{s,t_2}-X^{x,m}_{s,t_1}|^r & \leq C\mathbb E\left|\int_{t_1}^{t_2} |b_m(t,X^{x,m}_{s,t})|\right|^r + C |W_{t_2}-W_{t_1}|^r \\
& \leq C(t_2-t_1)^{ r}.
\end{align*}

4) In particular,
$$
\sup_m\sup_{0 \leq s \leq t \leq T} \sup_{x \in \mathbb R^d}\mathbf E|X_{s,t}^{x,m}|^r <\infty.
$$

5) One has (the Sobolev norm is in the $x$ variable)
$$
\sup_m\sup_{0\leq s \leq t \leq T}\sup_{y \in \mathbb R^d} \mathbf{E}\int_{B_1(y)} |\nabla_x X^{x,m}_{s,t}|^r dx<\infty.
$$
Indeed,
\begin{align*}
\mathbf{E}\int_{B_1(y)} |\nabla_x X^{x,m}_{s,t}|^r dx
& \leq \left(\int_{B_1(y)} \bigl(\mathbf{E} |\nabla_x X^{x,m}_{s,t}|^r\bigr)^2 dx \right)^{\frac{1}{2}} |B_1(y)|^{\frac{1}{2}} \\
& = c_d \|\nabla X^m_{s,t}\|^r_{L^{2r}(B_1(y),L^r(\Omega))},
\end{align*}
so it remains to apply Proposition \ref{prop2}(\textit{i}).

\medskip

Let now $r>d$. Combining 4) and 5), and using the Sobolev embedding theorem, one obtains (the H\"{o}lder norm is in the $x$ variable)
$$
\sup_m\sup_{0 \leq s \leq t \leq T} \sup_{y \in \mathbb R^d} \mathbf{E}\|X_{s,t}^{x,m}\|_{C^{1-\frac{d}{r}}(B_1(y))}<\infty,
$$ 
and so one arrives at:

\medskip

6) For all $0 \leq s \leq t \leq T$, $x,y \in \mathbb R^d$ with $|x-y| \leq 1$, $$\mathbf{E}|X^{x,m}_{s,t_2}-X^{y,m}_{s,t_1}|^r \leq C|x-y|^{r-d}, \quad r>d.$$

\medskip

7) Repeating the proof from \cite{RZ} (which is a combination of 3) and 6), by means of the Markov property and the independence of $X_{s_1,s_2}^{x,m}$ and $X_{s_2,t}^{y,m}$), one arrives at
$$
\mathbf{E}|X_{s_1,t}^{x,m}-X_{s_2,t}^{x,m}|^r \leq C(s_2-s_1)^{r-d}, \quad 0 \leq s_1 \leq s_2 \leq t.
$$

\medskip

Estimates 3), 6), 7) combined yield, for $r>d$,
\begin{equation}
\label{ineq1}
\mathbf{E}|X_{s_1,t_1}^{x,m}-X_{s_2,t_2}^{y,m}|^r \leq C(|t_2-t_1|^{r} + |x-y|^{r-d} + |s_2-s_1|^{r-d})
\end{equation}
for all $(s_i,t_i) \in \Delta_2(T)$, $i=1,2$.

\medskip

Now comes the final stage in the approach of R\"{o}ckner-Zhao. Proposition \ref{prop2}(\textit{i})-(\textit{iii}) verifies conditions of \cite[Lemma 3.1]{RZ}, i.e.\,of the relative compactness criterion for  random fields on the Wiener-Sobolev space (see the discussion of  history of this type of results in \cite{RZ}). This, and a standard diagonal argument, allow to conclude that there is a subsequence of $\{X_{s,t}^{x,m}\}$ (without loss of generality, still denoted by $\{X_{s,t}^{x,m}\}$) and a countable subset $D$ of $\mathbb R^d$ such that
$$
X_{s,t}^{x,m} \rightarrow X_{s,t}^x \quad \text{ in } L^2(\Omega) \quad \text{ as } m \rightarrow \infty
$$
for all $(s,t) \in \mathbb Q^2 \times \Delta_2(T)$, $x \in D$.
Moreover, in view of 4), one has  $X_{s,t}^{x,m} \rightarrow X_{s,t}^x$ in $L^r(\Omega)$, $r \geq 1$. Now \eqref{ineq1} yields for $r>d$, upon applying Fatou's lemma, that
\begin{equation}
\label{ineq2}
\mathbf{E}|X_{s_1,t_1}^{x}-X_{s_2,t_2}^{y}|^r \leq C(|t_2-t_1|^{r} + |x-y|^{r-d} + |s_2-s_1|^{r-d})
\end{equation}
for all $(s_i,t_i) \in \mathbb Q^2 \times \Delta_2(T)$, $i=1,2$, $x,y \in D$. Kolmogorov-Chentsov theorem (after selecting $r>d$ even larger) allows to extend $X_{s,t}^x$ to a continuous random field, and yields, together with the equicontinuity estimate \eqref{ineq1},
$$
X_{s,t}^{x,m} \rightarrow X_{s,t}^{x} \quad \text{$\mathbf{P}$-a.s.}  \text{ as } m \rightarrow \infty
$$
for all $(s,t) \in \Delta_2(T)$, $x \in \mathbb R^d$.

By \cite[Cor.\,6.4]{KM},
\begin{equation}
\label{bd_}
\mathbf{E}\bigg[\int_{s}^{t}|\mathsf{f}(\tau,X_{s,\tau}^{x,m})|d\tau\bigg] \leq C_1\sup_{z \in \mathbb Z^d}\|\mathsf{f} \sqrt{\rho_{z}}\|_{L^2([s,t],L^2)}
\end{equation}
(cf.\,proof of 1) above), and so
\begin{equation}
\label{bd_2}
\mathbf{E}\bigg[\int_{s}^{t}|\mathsf{f}(\tau,X_{s,\tau}^{x})|d\tau\bigg] \leq C_1\sup_{z \in \mathbb Z^d}\|\mathsf{f} \sqrt{\rho_{z}}\|_{L^2([s,t],L^2)}
\end{equation}
where, recall, $\mathsf{f} \in \mathbf{F}_\nu$ is bounded and smooth, but the constant $C_r$ does not depend on smoothness of bundedness of $\mathsf{f}$. Using Fatou's lemma, one can extend \eqref{bd_} to all $\mathsf{f} \in \mathbf{F}_\nu$, i.e.\,not necessarily smooth. (We will be selecting e.g.\,$\mathsf{f}=b-b_m$, in which case $\nu=2\delta$.)

Now, to show that $X_{s,t}^x$ is a strong solution to \eqref{sde2}, it remains to show that 
$\int_s^t b_m(\tau,X_{s,\tau}^{x,m})d\tau \rightarrow \int_s^t b(\tau,X_{s,\tau}^{x})d\tau$ in $L^1(\Omega)$. Indeed,
\begin{align*}
\mathbf{E}\biggl|\int_s^t (b_m(\tau,X_{s,\tau}^{x,m})d\tau -\int_s^t b(\tau,X_{s,\tau}^{x})d\tau \biggr| & \leq \mathbf{E}\biggl|\int_s^t (b_m-b_k)(\tau,X_{s,\tau}^{x,m})d\tau\biggr| \\
& + \mathbf{E}\biggl|\int_s^t b_k(\tau,X_{s,\tau}^{x,m})d\tau - \int_s^t b_k(\tau,X_{s,\tau}^{x})d\tau\biggr| \\
& + \mathbf{E}\biggl|\int_s^t (b_k-b)(\tau,X_{s,\tau}^{x})d\tau\biggr|=:I_1+I_2+I_3.
\end{align*}
By \eqref{bd_}, $I_1 \leq \sup_{z \in \mathbb Z^d}\|(b_m-b_k) \sqrt{\rho_{z}}\|_{L^2([s,t],L^2)} \rightarrow 0$ as $m,k \rightarrow \infty$, where the $L^2$ norm tends to zero since by our assumption $b$ has compact support. Let us fix $k$ sufficiently large. By \eqref{bd_2}, $I_3 \rightarrow 0$ as $m \rightarrow \infty$. Finally, $I_2 \rightarrow 0$ as $m \rightarrow \infty$ (for $k$ fixed above) by the Dominated convergence theorem. This yields  that $X_{s,t}^x$ is a strong solution to \eqref{sde1}. This strong solution, clearly, satisfies \eqref{krylov}.

Finally, regarding uniqueness of $X_{s,t}^x$ in the class of strong solutions  satisfying Krylov estimate \eqref{krylov}. The proof in \cite{RZ} is based on Cherny's theorem \cite{C} (strong existence + weak uniqueness $\Rightarrow$ strong uniqueness) and the result from \cite{RZ_weak} on the uniqueness of weak solution to SDE \eqref{sde1} in the class of solutions satisfying a Krylov-type bound. In our setting, it suffices to use instead the weak uniqueness result from \cite{KM}, valid for form-bounded drifts. This yields the uniqueness result in Theorem \ref{thm1} within the class \eqref{krylov0}; regarding the uniqueness within the class \eqref{krylov}, one needs to apply the weak uniqueness result from \cite{Ki_Morrey}.

\hfill \qed

\bigskip

\end{document}